\newtheorem{thm}{Theorem}
\newtheorem{proposition}{Proposition}
\newtheorem{definition}[proposition]{Definition}
\newcommand{\R}{\ensuremath{\mathbb{R}}}
\newcommand{\N}{\ensuremath{\mathbb{N}}}
\newcommand{\F}{\ensuremath{\mathcal{F}}}
\newcommand{\G}{\ensuremath{\mathcal{G}}}
\renewcommand{\S}{\ensuremath{\mathcal{S}}}
\renewcommand{\H}{\ensuremath{\mathcal{H}}}
\renewcommand{\G}{\ensuremath{\mathcal{G}}}
\newcommand{\PW}{\ensuremath{\mathcal{PW}}}
\newcommand{\BP}{\ensuremath{\mathcal{BP}}}
\newcommand{\Radon}{\ensuremath{\mathcal{R}}}
\newcommand{\dx}[1]{\ensuremath{\,d\kern-0.05em#1}}
\newcommand{\dxx}[2]{\ensuremath{\,d\kern-0.05em#1}\kern-0.05em(\kern-0.075em#2\kern-0.075em)}
\newcommand{\intsw}{\ensuremath{\int_{S^1}\hspace{-0.5em}\dx{\omega}}}
\newcommand{\st}{\ensuremath{\text{s.t.}}}
\newcommand{\textdef}{\textnormal}
\DeclareMathOperator{\id}{id}
\DeclareMathOperator{\sinc}{sinc}
\begin{document}
%
% paper title
% can use linebreaks \\ within to get better formatting as desired
\title{Irregular Sampling of the Radon Transform of Bandlimited Functions}

% author names and affiliations
% use a multiple column layout for up to three different
% affiliations
\author{\IEEEauthorblockN{Thomas Wiese}
\IEEEauthorblockA{Associate Institute for Signal Processing\\
Technische Universität München, Germany\\
Email: thomas.wiese@tum.de}
\and
\IEEEauthorblockN{Laurent Demaret}
\IEEEauthorblockA{Helmholtz Center Munich, Germany\\
Email: laurent.demaret@helmholtz-muenchen.de}
}

\maketitle
\begin{abstract}
We provide conditions for exact reconstruction of a bandlimited function from irregular polar samples of its Radon transform. First, we prove that the Radon transform is a continuous $L^2$-operator for certain classes of bandlimited signals. We then show that the Beurling-Malliavin condition for the radial sampling density ensures existence and uniqueness of a solution. Moreover, Jaffard's density condition is sufficient for stable reconstruction.
\end{abstract}

\IEEEpeerreviewmaketitle

\section{Introduction}
In computed tomography (CT), a central question is the following: what kind of detail can be resolved from a particular CT scan (sinogram)? Notwithstanding the lack of a clear definition of the term detail, in many applications a satisfactory and useful answer is provided in terms of the Nyquist frequency connected to the sampling geometry. 

The \enquote{pure} case of reconstructing a function from its possibly irregular samples has been solved nicely for classes of bandlimited functions in terms of density theorems as we will see in Section~\ref{sec:review} of this paper. Due to difficulties that arise when defining the Radon transform for bandlimited functions, these results have not yet been used in the context of CT. Instead, efforts revolved around the analysis of quasi-bandlimited functions and the results bear the deficiency of only asymptotically controllable errors~\cite{Natterer86,Rattey81}.

This paper closes this apparent gap in the literature, namely, we show that the Radon transform can be defined in the usual sense as a continuous $L^2$-operator for certain classes of bandlimited functions. The Radon transform of such signals is itself bandlimited and it is shown that stable and exact reconstruction of these signals from their irregularly sampled Radon transforms is possible if the sampling set satisfies certain density requirements.

The remainder of this paper is organized as follows: In Section~\ref{sec:motivation} we present current techniques in reconstruction of the sampled Radon transform and how they relate to spaces of bandlimited functions and sampling. In Section~\ref{sec:review} we provide a dense overview of results from sampling theory for bandlimited functions. After showing continuity of the Radon transform and its inverse for certain bandlimited functions in Section~\ref{sec:radonpw}, we will apply these results to the sampled Radon transform in Section~\ref{sec:radonsampling}.

\section{Motivation and Related Work}\label{sec:motivation}
For efficient experimental design of CT scans, i.e. determination of a suitable sampling geometry or a posteriori choice of function spaces for reconstruction, it is essential to understand the discretization effects due to sampling. Furthermore, the emergence of CT acquisition procedures involving incomplete or irregular data calls for irregular sampling theory. Past research has focused
% for the most part
on functions that are simultaneously essentially space- and band-limited, i.e., functions for which
\begin{equation*}
\int_{\R^2 \setminus B_R} |f(x)|^2 \dx{x} \quad \text{and} \quad 
\int_{\R^2 \setminus B_R} |(\F f)(\xi)|^2 \dx{\xi}
\end{equation*}
decay exponentially with the radius $R$ of the ball $B_R$. For these functions, interleaved sampling geometries are more efficient than regular sampling geometries~\cite{Natterer86,Rattey81}.

Bandlimitedness conditions also appear implicitly in reconstruction techniques based on discretizations of the inverse Radon transform. Filtered backprojection tacitly assumes that the Radon transform is bandlimited and periodic in the radial coordinate (for computation of a so-called absolute derivative operator) and that quadrature rules for the angular integral (for the backprojection) are exact---for example by assuming that the angular component of the Radon transform has a finite Fourier series representation~\cite{Natterer86}.

Algorithms that are based on the Fourier slice theorem commonly use some sort of Fast Fourier Transform (FFT) for the radial variable to obtain the Fourier transform of the unknown function on a polar grid. This operation is either followed by interpolation onto a rectangular grid and application of the two-di\-men\-sion\-al inverse FFT---a process known as gridding~\cite{Epstein03,Margolis04}---or by using a version of the two-di\-men\-sion\-al inverse FFT for non-rectangular grids~\cite{Grinberg00,Potts02}. The assumptions are, again, that the Radon transform $\Radon f$ is bandlimited and periodic with respect to the radial coordinate and that $f$ is bandlimited and periodic in both Cartesian variables.

Finally, algebraic reconstruction techniques can handle all sorts of irregular grids and are very efficient. However, it is difficult to analyze irregular sampling with such methods. First, it is hard to find function spaces for which the system matrix is injective and second, the combined effects of regularization, noise reduction, and early termination of iterative solvers are hard to quantify and isolate from sampling effects.

\section{Review of Sampling Theory for Bandlimited Functions}\label{sec:review}
We provide a brief review of the available theorems and techniques used for irregular sampling of bandlimited square-integrable functions in one dimension. These results can be extended to more dimensions when sampling on product grids.
\begin{definition}[Paley-Wiener spaces]\label{def:paleywiener}
Let $\F$ denote the Fourier transform. The Paley-Wiener space of $R$-radially bandlimited and square-integrable functions is defined as
\begin{equation*}
\PW_R(\R^d) \coloneqq \{ f \in L^2(\R^d) : \F f|_{\R^d \setminus B_R^d} = 0\},
\end{equation*}
where $B_R^d$ is the $d$-dimensional ball with radius $R$. Similarly, for $r>0$, we define the space of bandpass functions as
\begin{equation*}
\BP_{rR}(\R^d) \coloneqq \{ f \in L^2(\R^d) : \F f|_{\R^d\setminus (B_R^d \setminus B_r^d)} = 0\}.
\end{equation*}
\end{definition}
Let $\Lambda \subset \R$ be a \textit{uniformly discrete} set of sample positions, that is, $\inf_{\lambda,\mu} |\lambda - \mu| > 0$ for $\lambda,\mu \in \Lambda$ and $\lambda \neq \mu$. This condition ensures that the sampling operator $S_\Lambda \colon \PW_R(\R) \to l^2(\Lambda)$ is always a continuous linear operator into $l^2(\Lambda)$~\cite{Groechenig92}. For a fixed bandwidth $R$, sampling theory gives conditions in terms of densities on the sampling set $\Lambda$ under which functions in $\PW_R(\R)$ can be identified by and reconstructed from its values on $\Lambda$. In particular, one wishes to establish whether~\cite{Benedetto01}
\begin{itemize}
\item $\Lambda$ is a \textit{set of uniqueness} for $\PW_R(\R)$, i.e., the sampling operator $S_\Lambda$ is injective or whether
\item $\Lambda$ is a \textit{set of sampling} for $\PW_R(\R)$, i.e., the sampling operator $S_\Lambda$ is continuous and continuously invertible on its range.
\end{itemize}
\begin{definition}[Densities]
Let $\Lambda \subset \R$ be uniformly discrete with $0 \notin \Lambda$ and with signed counting function $N_\Lambda(t)$, which counts the number of points in the interval with endpoints $0$ and $t$ and has negative sign for $t<0$.
\begin{enumerate}[i)]
\item The \textdef{Beurling-Malliavin density} is defined as
\begin{equation*}
D_{bm}(\Lambda) = \inf_{c \geq 0}c\, \;\st\; \left\{\begin{aligned}
&\exists\, h \in C^1(\R),\, 0\leq h^\prime(t) \leq c,\\
&\int_\R \frac{|N_\Lambda(t)-h(t)|}{1+t^2} \dx{t}< \infty
\end{aligned}\right\} .
\end{equation*}
\item The \textdef{frame density} is defined as
\begin{equation*}\label{eq:uniformdensity}
D_f(\Lambda)\coloneqq \sup_{\Gamma \subset \Lambda}\sup_{c \geq 0} \,c \;\, \st \; N_\Gamma(t)- ct = O(1),
\end{equation*}
where the supremum is over all subsets $\Gamma$ for which the asymptotics exist and $D_f(\Lambda)=0$ if no such subset exists.
\end{enumerate}
\end{definition}

These densities apply to reasonably general sampling sets. In particular, the frame density is invariant under removals of finitely many points, i.e., one arbitrarily sized \enquote{hole} is allowed. In case of the Beurling-Malliavin density, it is possible to construct grids with $D_{bm}(\Lambda)=1$ that have infinitely many \enquote{holes} of unbounded size~\cite{Levinson40}. If $\Lambda$ is a set for which there exists $c>0$ and for which the asymptotics $N_\Lambda(t)-ct=O(1)$ hold, one also says that $\Lambda$ has \textit{uniform density} $c$. Our definition of the Beurling-Malliavin density can be found in~\cite{Kahane61}. It is a simplification of the original \textit{exterior density} $A_e(\dx{N_\Lambda})$ used by Beurling and Malliavin~\cite{Beurling67}, which also applies for sampling sets of complex numbers. The following theorem encapsulates several decades of research~\cite{Beurling67,Beurling62,Jaffard91,Landau67}.
\begin{thm}[Sampling theorems]\label{thm:samplingtheorem}
For $\Lambda$ to be a set of uniqueness for $\PW_\pi(\R)$
\begin{enumerate}[(i)]
\item it is necessary that $D_{bm}(\Lambda) \geq 1$,
\item it is sufficient that $D_{bm}(\Lambda) > 1$.
\end{enumerate}
For $\Lambda$ to be a set of sampling for $\PW_\pi(\R)$
\begin{enumerate}[(i)]
%\item it is necessary that $D_b^-(\Lambda) \geq 1$,% (Landau's necessary condition),
\item it is necessary that $D_f(\Lambda) \geq 1$,% (Jaffard's sufficient condition).
\item it is sufficient that $D_f(\Lambda) > 1$.% (Jaffard's sufficient condition).
\end{enumerate}
\end{thm}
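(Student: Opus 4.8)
The plan is to reduce every assertion to a statement about the complex exponential system $\{e^{i\lambda x}\}_{\lambda\in\Lambda}$, viewed as a family of functions on $[-\pi,\pi]$, and then to invoke the classical theorems of Beurling--Malliavin, Landau, and Jaffard. First I would use that the Fourier transform is unitary from $\PW_\pi(\R)$ onto $L^2[-\pi,\pi]$ and that under this identification $S_\Lambda$ becomes, up to a fixed constant, $f\mapsto(\langle\F f,e^{i\lambda\,\cdot}\rangle)_{\lambda\in\Lambda}$. Consequently $\Lambda$ is a set of uniqueness for $\PW_\pi(\R)$ exactly when $\{e^{i\lambda x}\}_{\lambda\in\Lambda}$ is complete in $L^2[-\pi,\pi]$, and a set of sampling exactly when that system is a frame for $L^2[-\pi,\pi]$; the standing uniform discreteness hypothesis on $\Lambda$ is precisely what always furnishes the Bessel (upper frame) bound, so that only the lower bound is ever at issue.

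For the uniqueness statements the single ingredient is the Beurling--Malliavin theorem on the radius of completeness~\cite{Beurling67}: with the normalization of $D_{bm}$ used here (the simplified form of~\cite{Kahane61}), the supremum of those $a>0$ for which $\{e^{i\lambda x}\}_{\lambda\in\Lambda}$ is complete in $L^2[-a,a]$ equals $\pi\, D_{bm}(\Lambda)$. Sufficiency of $D_{bm}(\Lambda)>1$ is then immediate, since the radius of completeness exceeds $\pi$. Necessity of $D_{bm}(\Lambda)\geq 1$ is the contrapositive: if $D_{bm}(\Lambda)<1$, the radius of completeness lies below $\pi$, so the system is not complete in $L^2[-\pi,\pi]$ and $\Lambda$ is not a set of uniqueness. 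That the borderline case $D_{bm}(\Lambda)=1$ cannot be resolved in either direction --- there are sets of density exactly $1$ that are sets of uniqueness and others that are not --- is exactly why the necessary and sufficient conditions only meet, but do not coincide, at the threshold.

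For the sampling statements I would treat the two directions separately. Necessity rests on Landau's density theorem~\cite{Landau67}: a set for which $\{e^{i\lambda x}\}_{\lambda\in\Lambda}$ is a frame for $L^2[-\pi,\pi]$ must have lower Beurling density at least $1$; from such a $\Lambda$ one can, for every $c<1$, thin out a subset $\Gamma\subseteq\Lambda$ whose counting function satisfies $N_\Gamma(t)-ct=O(1)$, whence $D_f(\Lambda)\geq 1$. Sufficiency rests on Jaffard's density theorem~\cite{Jaffard91}: if $D_f(\Lambda)>1$, pick a subset $\Gamma\subseteq\Lambda$ and a $c>1$ with $N_\Gamma(t)-ct=O(1)$; this $\Gamma$ is uniformly discrete of uniform density $c>1$, so $\{e^{i\lambda x}\}_{\lambda\in\Gamma}$ is a frame for $L^2[-\pi,\pi]$. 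Adjoining the remaining points of $\Lambda$ cannot destroy the lower frame bound, and uniform discreteness of $\Lambda$ retains the Bessel bound, so $\{e^{i\lambda x}\}_{\lambda\in\Lambda}$ is again a frame and $\Lambda$ is a set of sampling.

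The step I expect to demand the most care is the bookkeeping between the three density notions that occur --- the Beurling--Malliavin density $D_{bm}$, Landau's lower Beurling density, and the frame density $D_f$ built from subsets with exact linear asymptotics --- and, concretely, the extraction of the well-behaved subset $\Gamma$ in both halves of the sampling statement: for necessity one removes points to expose a clean linear counting function, while for sufficiency such a subset is handed over by the definition of $D_f$. The genuinely hard analysis --- the multiplier and majorant construction behind Beurling--Malliavin, Landau's comparison argument via finite sections, and Jaffard's perturbation estimate --- is entirely contained in the cited works and need not be reproduced here.
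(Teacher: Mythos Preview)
The paper does not actually prove this theorem: it is presented in Section~\ref{sec:review} as a review statement, with the sentence ``The following theorem encapsulates several decades of research~\cite{Beurling67,Beurling62,Jaffard91,Landau67}'' serving in lieu of a proof. Your proposal is therefore not competing against any argument in the paper but is rather a correct unpacking of precisely those four citations into a proof sketch.

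The route you take --- identifying $\PW_\pi(\R)$ with $L^2[-\pi,\pi]$ via the Fourier transform, translating uniqueness into completeness of $\{e^{i\lambda x}\}$ and sampling into the frame property, and then invoking Beurling--Malliavin for the completeness radius, Landau for the necessary lower density, and Jaffard for the sufficient frame condition --- is exactly the standard one and matches the references the paper cites. The only place where a reader might ask for one more line is the passage from Landau's conclusion (lower Beurling density $\geq 1$) to $D_f(\Lambda)\geq 1$ in the sense of the paper's Definition of frame density: you assert that one can thin $\Lambda$ to a subset $\Gamma$ with $N_\Gamma(t)-ct=O(1)$ for any $c<1$, which is true by a greedy selection once the lower Beurling density exceeds $c$, but is worth stating explicitly since the paper's $D_f$ is phrased through such subsets rather than through the Beurling density directly.
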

In the above theorem, it is possible to replace $\pi$ with $R>0$ and $1$ with $R/\pi$ on the right hand side of the density conditions. The last condition is known as Jaffard's sufficient condition.

Using the theory of frames, reproducing kernel Hilbert spaces (RKHS), and tensor products, one can generalize these results to two (and more) dimensions for Cartesian products of sampling grids~\cite{Benedetto01,Bourouihiya08}. A RKHS $\H$ with domain $\R^d$ is a Hilbert space in which all point evaluations are continuous linear functionals, i.e, for all $x \in \R^d$, the map $f \mapsto f(x)$ is a bounded linear functional in $\H$~\cite{Aronszajn50}. Paley-Wiener spaces and subspaces of $L^2(\R^d)$ spanned by a finite number of functions are examples of RKHS~\cite{Yao67a}. %As for tensor products of Hilbert spaces, we note that for $E,F \subset \R$, the space $L^2(E\times F, \dx{x}\otimes\dx{y})$ can be identified with $L^2(E, \dx{x}) \otimes L^2(F, \dx{y})$, where $\dx{x}$ and $\dx{y}$ are the Lebesgue measures on $E$ and $F$, respectively~\cite{Reed81}.
\begin{thm}\label{thm:tensorproduct}
Let $\H_1$ and $\H_2$ be RKHS of functions on $\R$. If for $i=1,2$, $\Lambda_i$ is a set of uniqueness, resp.\ set of sampling, for $\H_i$, then $\Lambda_1 \times \Lambda_2$ is a set of uniqueness, resp.\ set of sampling, for $\H_1 \otimes \H_2$.
\end{thm}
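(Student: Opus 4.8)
The plan is to reduce the two--dimensional statement to the one--dimensional hypotheses by treating one coordinate at a time, using that $\H_1\otimes\H_2$ is itself a RKHS of functions on $\R^2$ whose reproducing kernel factorises as $K((x_1,x_2),(y_1,y_2))=K_1(x_1,y_1)\,K_2(x_2,y_2)$, where $K_i$ is the reproducing kernel of $\H_i$. Fixing orthonormal bases $\{e_j\}$ of $\H_1$ and $\{e_k'\}$ of $\H_2$, every $f\in\H_1\otimes\H_2$ can be written $f=\sum_{j,k}c_{jk}\,e_j\otimes e_k'$ with $\sum_{j,k}|c_{jk}|^2=\norm{f}^2<\infty$, and a Cauchy--Schwarz estimate using $\sum_j|e_j(x_1)|^2=K_1(x_1,x_1)<\infty$ together with the analogous bound in $\H_2$ shows that the double sum $f(x_1,x_2)=\sum_{j,k}c_{jk}\,e_j(x_1)e_k'(x_2)$ converges absolutely, so point evaluation in $\H_1\otimes\H_2$ is given by this sum. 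In particular, for each fixed $x_2$ the slice $x_1\mapsto f(x_1,x_2)$ lies in $\H_1$: it equals $\sum_j\phi_j(x_2)\,e_j$, where $\phi_j\coloneqq\sum_k c_{jk}e_k'\in\H_2$, and $\norm{f(\cdot,x_2)}_{\H_1}^2=\sum_j|\phi_j(x_2)|^2$.

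For the uniqueness claim, I would suppose $f$ vanishes on $\Lambda_1\times\Lambda_2$. Then for each fixed $\mu\in\Lambda_2$ the slice $f(\cdot,\mu)\in\H_1$ vanishes on $\Lambda_1$, hence $f(\cdot,\mu)=0$ because $\Lambda_1$ is a set of uniqueness for $\H_1$; this forces $\phi_j(\mu)=0$ for all $j$ and all $\mu\in\Lambda_2$. Fixing $j$, the function $\phi_j\in\H_2$ then vanishes on $\Lambda_2$, so $\phi_j=0$ because $\Lambda_2$ is a set of uniqueness for $\H_2$, whence $c_{jk}=0$ for all $j,k$ and $f=0$.

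For the sampling claim, I would use the equivalent description of "set of sampling'' as a two--sided inequality: let $A_i,B_i>0$ satisfy $A_i\norm{g}_{\H_i}^2\le\sum_{\lambda\in\Lambda_i}|g(\lambda)|^2\le B_i\norm{g}_{\H_i}^2$ for all $g\in\H_i$. Summing the bounds $A_1\norm{f(\cdot,\mu)}_{\H_1}^2\le\sum_{\lambda\in\Lambda_1}|f(\lambda,\mu)|^2\le B_1\norm{f(\cdot,\mu)}_{\H_1}^2$ over $\mu\in\Lambda_2$, inserting $\norm{f(\cdot,\mu)}_{\H_1}^2=\sum_j|\phi_j(\mu)|^2$, interchanging the nonnegative sums, and applying the $\H_2$ bounds to each $\phi_j$ together with $\sum_j\norm{\phi_j}_{\H_2}^2=\sum_{j,k}|c_{jk}|^2=\norm{f}^2$ yields $A_1A_2\norm{f}^2\le\sum_{(\lambda,\mu)\in\Lambda_1\times\Lambda_2}|f(\lambda,\mu)|^2\le B_1B_2\norm{f}^2$. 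The right inequality shows $S_{\Lambda_1\times\Lambda_2}$ maps into $l^2$ boundedly, the left one that it is bounded below on its range.

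I do not expect a serious obstacle here: the convergence bookkeeping and the identification of the slice norm are routine, and the only point genuinely requiring care is the realisation of $\H_1\otimes\H_2$ as a RKHS of honest functions on $\R^2$ with the product kernel and the corresponding formula for point evaluation. Once that is granted, the one--variable--at--a--time argument is essentially forced, both properties being manifestations of the fact that the closed linear span of a set of elementary tensors $\{a\otimes b:a\in M_1,\,b\in M_2\}$ is $\overline{\lin}\,M_1\otimes\overline{\lin}\,M_2$.
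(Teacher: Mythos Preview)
Your argument is correct. The paper does not actually prove this theorem: it only remarks that the statement follows from two abstract facts, namely that tensor products of complete systems are complete in the tensor product space, and that tensor products of frames are frames in the tensor product space (citing Bourouihiya). Your sampling argument---iterating the one-dimensional frame inequalities to obtain product bounds $A_1A_2$ and $B_1B_2$---is precisely the standard proof of the second of these facts, so there the approaches coincide. For uniqueness, your slicing argument is a concrete, function-theoretic version of the abstract completeness statement: the paper's route would say that $\{K_1(\cdot,\lambda)\}_{\lambda\in\Lambda_1}$ and $\{K_2(\cdot,\mu)\}_{\mu\in\Lambda_2}$ are complete (since orthogonality to all of them forces a function to vanish on the sampling set), hence their elementary tensors---which are exactly the reproducing kernels $K(\cdot,(\lambda,\mu))$ of $\H_1\otimes\H_2$---are complete; you instead fix a coordinate and reduce to the one-dimensional hypothesis directly. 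Your version has the mild advantage of being self-contained and of making explicit the RKHS structure (product kernel, well-definedness of slices), which the paper leaves implicit in the citation.
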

The result is a consequence of the fact that tensor products of complete systems are complete in the tensor product space and that tensor products of frames are frames in the tensor product space~\cite{Bourouihiya08}.

\section{Radon Transform of Bandlimited Functions}\label{sec:radonpw}
In an effort to apply Theorem~\ref{thm:samplingtheorem} to the irregularly sampled Radon transform, we first need to ensure \textit{compatibility} between Paley-Wiener spaces and the Radon transform.
Therefore, in this section, we establish conditions under which the Radon transform can be defined as a continuous and continuously invertible $L^2$-operator between subspaces of $\PW_R(\R^2)$ and $\PW_R(\R) \otimes L^2(S^1)$, where $S^1$ is the unit sphere in $\R^2$. Our approach contrasts with the conventional definition of the Radon transform as a continuous---but not continuously invertible---operator between $L^2(B_R(\R^2))$ and $L^2([-R,R] \times S^1)$. The advantage of our definition is that for irregular sampling grids of the form $\Lambda_s \times \Lambda_\omega$ with $\Lambda_s \subset \R$ and $\Lambda_\omega \subset [0,\pi]$, we can apply Theorems~\ref{thm:samplingtheorem} and~\ref{thm:tensorproduct} to find conditions under which \textit{exact} and \textit{stable} reconstruction of a function from its sampled Radon transform is possible.

First, we present a counter example which highlights the difficulties that arise when defining the Radon transform for bandlimited functions. We will use the well-known Fourier slice theorem, which provides the following decomposition of the Radon transform for Schwartz functions $f \in \S(\R^2)$:
% \begin{proposition}[Fourier slice theorem]
% The Radon transform of a Schwartz function $f \in \S(\R^2)$ can be written as
\begin{equation*}\label{eq:radondecomposition}
(\Radon f)(s,\omega) = (\F_s^{-1}\id\Phi\F f)(s, \omega).
\end{equation*}
Here, we denote the two-dimensional Fourier transform by $\F$, the one-dimensional Fourier transform with respect to the radial coordinate by $\F_s$, and the change from polar to Cartesian coordinates by $\Phi\colon L^2(\R^2) \to L^2(\R^+ \times S^1, \sigma \dx{\sigma} \otimes \dx{\omega})$. By explicitly considering the \textit{change of norms}, 
\begin{equation*}
\id \colon L^2(\R^+ \times S^1, \sigma\dx{\sigma}\otimes \dx{\omega}) \to L^2(\R \times S^1, \dx{\sigma} \otimes \dx{\omega}),
\end{equation*}
we retain the $L^2$-isometry property (up to some power of $2\pi$) of the Fourier transforms and, hence, $L^2$-continuity of the overall operator is determined solely by that of the change of norms.\footnote{For $\sigma < 0$ we define $(\id g)(\sigma,\omega) = g(-\sigma,-\omega)$, which ensures that the new variables can still be interpreted as polar coordinates.} 
To show that the Radon transform is not $L^2$-continuous on $\PW_R(\R^2)$, consider the sequence $f_n \in \PW_R(\R^2)$ defined through its Fourier transform:
\begin{equation*}
(\F f_n)(\xi) = \begin{cases}
|\xi|^{-1/2}&\text{if}\; n^{-1} \leq |\xi| \leq R,\\
0&\text{otherwise.}
\end{cases}
\end{equation*}
Each $f_n$ is bandlimited, because $(\F f_n)(\xi) = 0$ for $|\xi| > R$, and $f_n \in L^2(\R^2)$, because $\F f_n \in L^2(\R^2)$ as
\begin{equation*}
\begin{aligned}
\int_{\R^2} \dx{\xi} |(\F f_n)(\xi)|^2
&= \intsw \int_{n^{-1}}^R \sigma \dx{\sigma} \left|\sigma^{-1/2}\right|^2\\
&= \intsw \int_{n^{-1}}^R \dx{\sigma}\\
&= 2\pi (R-n^{-1}).
\end{aligned}
\end{equation*}
As can be seen, the norm of $\F f_n$ tends to $\sqrt{2\pi R}$ and that of $f_n$ to $\sqrt{R/(2\pi)}$. On the other hand, the norm of $(\id \Phi \F)(f_n)$ with respect to the \enquote{flat} measure $\dx{\sigma}\otimes\dx{\omega}$ and, hence, that of $\Radon f_n$ in $L^2(\R\times S^1)$, is unbounded, because
\begin{equation*}
\begin{aligned}
\intsw \int_\R \dx{\sigma}|(\Phi\F f_n)(\sigma,\omega)|^2
&= 2\intsw \int_{n^{-1}}^R \dx{\sigma} \left|\sigma^{-1/2}\right|^2\\
&= 2\intsw \int_{n^{-1}}^R \dx{\sigma} \sigma^{-1}\\
&= 4\pi(\ln R + \ln n),
\end{aligned}
\end{equation*}
which tends to infinity as $n$ grows.\footnote{The factor $2$ is a consequence of $\id$ mapping $\R^+$ to the whole real line.} Thus, the Radon transform cannot be continuous on $\PW_R(\R^2)$.

This defect of the Radon transform is a consequence of the fact that the Fourier transforms of functions in $\PW_R(\R^2)$ may have mild singularities at the origin. If we restrict the Paley-Wiener space to bandpass functions $f \in \BP_{rR}(\R^2)$, we can easily verify the boundedness of the Radon transform:
\begin{equation*}\label{eq:boundedness}
\begin{aligned}
\intsw \int_\R \dx{\sigma}\left|(\F f)(\sigma,\omega)\right|^2
&= 2\intsw \int_r^R \hspace{-0.5em}\dx{\sigma} \left|(\F f)(\sigma,\omega)\right|^2\\
&\leq 2\intsw \int_r^R \hspace{-0.5em}\dx{\sigma}\frac{\sigma}{r} \left|(\F f)(\sigma,\omega)\right|^2\\
&= 2r^{-1} \int_{\R^2}\dx{\xi} |(\F f)(\xi)|^2.
\end{aligned}
\end{equation*}
The same calculation---replace $r$ with $R$ in the denominator and turn around the inequality---also yields the lower bound%, i.e.,% for $\|\Radon f\|$, i.e.,
\begin{equation*}
2R^{-1}\|f\|^2 \leq \left\| \Radon f \right\|^2 \leq 2r^{-1}\|f\|^2.
\end{equation*}
This implies (e.g.,~\cite{Rynne08}, Thm.~4.48) closedness of the range and existence of a continuous inverse of the Radon transform:%. We summarize the above:
\begin{thm}[Radon isomorphism]\label{thm:radonisomorphism}
The Radon transform is a Hilbert space isomorphism between the Hilbert spaces $\BP_{rR}(\R^2)$ and $\Radon (\BP_{rR}(\R^2)) \subset L^2(\R \times S^1, \dx{s} \otimes \dx{\omega})$.
\end{thm}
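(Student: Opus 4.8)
\textit{Proof proposal.} The plan is to deduce the statement from the two-sided estimate already established, combined with the Fourier slice factorization $\Radon = \F_s^{-1}\,\id\,\Phi\,\F$. First I would make precise that $\Radon$ is a well-defined bounded operator on $\BP_{rR}(\R^2)$. On Schwartz functions the Fourier slice theorem supplies the factorization; of its factors, $\F$ and $\F_s^{-1}$ are (constant multiples of) $L^2$-isometries, $\Phi$ is the unitary identification of $L^2(\R^2)$ with the weighted space $L^2(\R^+\times S^1,\sigma\dx{\sigma}\otimes\dx{\omega})$, and the change of norms $\id$, restricted to Fourier transforms of functions in $\BP_{rR}(\R^2)$, is bounded by $\sqrt{2/r}$ by the computation preceding the theorem. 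Since $\S(\R^2)\cap\BP_{rR}(\R^2)$ is dense in $\BP_{rR}(\R^2)$ (mollify $\F f$ inside the annulus $B_R^2\setminus B_r^2$), the composition extends uniquely to a bounded operator $\Radon\colon\BP_{rR}(\R^2)\to L^2(\R\times S^1,\dx{s}\otimes\dx{\omega})$ satisfying the stated upper bound.

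Next I would invoke the lower bound $2R^{-1}\norm{f}^2\le\norm{\Radon f}^2$: it shows that $\Radon$ is bounded below, hence injective, and that the inverse on $\range\Radon$ is continuous with norm at most $\sqrt{R/2}$. Finally, a bounded-below operator has closed range: if $\Radon f_n\to g$ in $L^2(\R\times S^1)$, then $(f_n)$ is Cauchy in $\BP_{rR}(\R^2)$ by the lower bound, so $f_n\to f$ for some $f\in\BP_{rR}(\R^2)$, and continuity of $\Radon$ forces $\Radon f=g$, so $g\in\range\Radon$. Thus $\Radon(\BP_{rR}(\R^2))$ is a closed subspace of $L^2(\R\times S^1,\dx{s}\otimes\dx{\omega})$, hence itself a Hilbert space, and $\Radon$ is a bounded linear bijection onto it with bounded inverse, i.e.\ a Hilbert space isomorphism. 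This is precisely the content of the cited \cite{Rynne08}, Thm.~4.48, once the two-sided bound is in hand.

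\textbf{Main obstacle.} The only genuine subtlety is the first step: justifying that the formal factorization $\F_s^{-1}\,\id\,\Phi\,\F$ actually represents $\Radon f$ for \emph{every} $f\in\BP_{rR}(\R^2)$ as an $L^2$-limit of the Schwartz case, and that this $L^2$-extension coincides with the classical line-integral Radon transform wherever the latter is defined pointwise. Everything after that is soft functional analysis driven entirely by the upper and lower norm estimates.
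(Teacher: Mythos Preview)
Your proposal is correct and follows essentially the same route as the paper: derive the two-sided norm estimate $2R^{-1}\norm{f}^2\le\norm{\Radon f}^2\le 2r^{-1}\norm{f}^2$ from the Fourier slice factorization, then invoke the standard fact (the paper's citation of \cite{Rynne08}, Thm.~4.48) that a bounded-below bounded operator has closed range and a continuous inverse on that range. You spell out the closed-range argument and the density/extension step explicitly, whereas the paper simply performs the norm computations on $\F f$ supported in the annulus and appeals to the reference; in particular, the subtlety you flag as the ``main obstacle'' is not addressed any more carefully in the paper than in your sketch.
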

We can also characterize the range of the Radon transform for bandpass functions. The theory of tensor products of separable $L^2$-spaces yields the decomposition~\cite{Reed81}
\begin{equation*}
L^2(\R \times S^1) \simeq L^2(\R,\dx{s}) \otimes L^2(S^1,\dx{\omega}).
\end{equation*}
The Fourier slice theorem shows that if $(\F f)(\xi) = 0$ for $|\xi|<r$ and $|\xi|>R$, then $(\F_s \Radon f)(\sigma,\omega)$ also vanishes for $\sigma$ outside of $[-R,-r]\cup [r,R]$. Hence, if $f \in \BP_{rR}(\R^2)$, then $\Radon f \in \BP_{rR}(\R)\otimes L^2(S^1)$. Similarly, for $g \in \BP_{rR}(\R) \otimes L^2(S^1)$ with $g(s,\omega)=g(-s,-\omega)$, we can go the inverse way of the Fourier slice theorem to define a function $f = \F^{-1} \Phi^{-1} \id^{-1} \F_s g$. The same calculations as above show that $f \in \BP_{rR}(\R^2)$ and since, by definition, $g = \Radon f$, we obtain:
\begin{thm}[Range theorem for bandpass functions]\label{thm:range}
The Radon transform maps $\BP_{rR}(\R^2)$ isomorphically to $\BP_{rR}(\R \times S^1)$, where we define
\begin{equation*}
\BP_{rR}(\R\times S^1) \coloneqq \left\{\begin{aligned} f \in \BP_{rR}(\R) \otimes L^2(S^1)\\\text{with}\; f(-s,-\omega) = f(s,\omega)\end{aligned}\right\}.
\end{equation*}%the subspace of symmetric functions $f(r,\omega)=f(-r,-\omega)$ of $\BP_{rR} \otimes L^2(S^1)$.
\end{thm}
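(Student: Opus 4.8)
The plan is to assemble the isomorphism directly from the Fourier-slice factorization $\Radon = \F_s^{-1}\,\id\,\Phi\,\F$, tracking how each factor acts on supports and on norms. First I would fix the functional-analytic setting: although the factorization is stated for Schwartz functions, each of $\F$, $\Phi$, $\id$ (on the relevant subspaces), and $\F_s$ is a bounded bijection between the appropriate $L^2$-spaces, so the identity $\Radon = \F_s^{-1}\id\Phi\F$ extends by continuity from the dense subspace $\S(\R^2)\cap\BP_{rR}(\R^2)$ to all of $\BP_{rR}(\R^2)$. This also re-derives the norm equivalence of Theorem~\ref{thm:radonisomorphism}, since $\F$, $\Phi$, $\F_s$ are isometries up to powers of $2\pi$ while $\id$ restricted to the annulus $r\le\sigma\le R$ distorts the norm by a factor between $r^{-1}$ and $R^{-1}$ (times $2$).

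Next I would establish the forward inclusion $\Radon(\BP_{rR}(\R^2))\subseteq\BP_{rR}(\R\times S^1)$. If $f\in\BP_{rR}(\R^2)$, then $\F f$ is supported in the annulus $B_R^2\setminus B_r^2$; passing to polar coordinates via $\Phi$ and then applying $\id$ (which, per the footnote, sets $(\id g)(\sigma,\omega)=g(-\sigma,-\omega)$ for $\sigma<0$) yields a function on $\R\times S^1$ supported in $([-R,-r]\cup[r,R])\times S^1$ that additionally satisfies the point symmetry $(\id\Phi\F f)(-\sigma,-\omega)=(\id\Phi\F f)(\sigma,\omega)$. Since $\F_s^{-1}$ acts only in the first variable, it preserves the frequency support $[-R,-r]\cup[r,R]$ (hence $\Radon f\in\BP_{rR}(\R)\otimes L^2(S^1)$) and, by a short computation substituting $s\mapsto -s$ inside the integral, the point symmetry $(\Radon f)(-s,-\omega)=(\Radon f)(s,\omega)$; thus $\Radon f\in\BP_{rR}(\R\times S^1)$.

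For surjectivity I would run the factorization backwards. Given $g\in\BP_{rR}(\R\times S^1)$, the key observation is that the symmetry $g(-s,-\omega)=g(s,\omega)$ forces $(\F_s g)(-\sigma,-\omega)=(\F_s g)(\sigma,\omega)$, which is exactly the compatibility condition making $\id^{-1}$ well-defined: one restricts $\F_s g$ to $\sigma>0$ and reinterprets it on $\R^+\times S^1$, the symmetry guaranteeing that no information is lost and (together with the $\sigma/r$ and $\sigma/R$ bounds used for Theorem~\ref{thm:radonisomorphism}) that the weighted norm is controlled. Applying $\Phi^{-1}$ and $\F^{-1}$ then produces $f:=\F^{-1}\Phi^{-1}\id^{-1}\F_s g\in L^2(\R^2)$ whose Fourier transform is supported in the annulus, i.e.\ $f\in\BP_{rR}(\R^2)$, and unwinding the definitions gives $\Radon f = g$. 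Injectivity is already contained in Theorem~\ref{thm:radonisomorphism}, so combining injectivity, surjectivity, and the two-sided norm bound $2R^{-1}\|f\|^2\le\|\Radon f\|^2\le 2r^{-1}\|f\|^2$ shows that $\Radon$ is a Hilbert-space isomorphism onto $\BP_{rR}(\R\times S^1)$.

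The main obstacle is the careful bookkeeping around $\sigma=0$: one must verify that $\id$ and $\id^{-1}$ are genuinely inverse isometries (up to the explicit constants) between $L^2(\R^+\times S^1,\sigma\dx\sigma\otimes\dx\omega)$ restricted to the annulus and the point-symmetric subspace of $L^2(\R\times S^1,\dx\sigma\otimes\dx\omega)$ with the same frequency support, and that the symmetry condition is transported in exactly the right form under each partial Fourier transform. It is precisely the annular support $|\xi|\ge r$ that keeps all of this bounded — which is why the argument fails for $\PW_R(\R^2)$, as the counterexample preceding Theorem~\ref{thm:radonisomorphism} shows.
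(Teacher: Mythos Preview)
Your proposal is correct and follows essentially the same route as the paper: the paper also uses the Fourier-slice factorization $\Radon=\F_s^{-1}\id\Phi\F$ to get the forward inclusion (tracking the annular support through $\Phi$ and $\id$) and then runs the factorization backwards, defining $f=\F^{-1}\Phi^{-1}\id^{-1}\F_s g$ for $g\in\BP_{rR}(\R\times S^1)$ and invoking the norm bounds from Theorem~\ref{thm:radonisomorphism}. You are somewhat more explicit than the paper about the density argument and the role of the symmetry condition in making $\id^{-1}$ well-defined, but the underlying argument is the same.
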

Remark that for $g \in \BP_{rR}(\R \times S^1)$, the Helgason-Ludwig moment conditions~\cite{Helgason80} are automatically satisfied, because $\F_s g$ and all of its derivatives vanish around the origin:
\begin{equation*}
\int_\R g(s,\omega)s^k ds = \left(\frac{i}{2\pi}\right)^k \frac{d^k}{d\sigma^k} (\F_s g)(0,\omega) = 0\;\forall\, k \in \N_0.
\end{equation*}

\section{Sampling Theorems for the Radon Transform}\label{sec:radonsampling}
The developments from the previous section allow us to apply the theory of bandlimited functions to the sampled Radon transform. Because of the isomorphism property established in Theorems~\ref{thm:radonisomorphism} and~\ref{thm:range}, the sampled Radon transform operator $\Radon_\Lambda \colon \BP_{rR}(\R^2) \to l^2(\Lambda), f\mapsto ((\Radon f)(\lambda))_{\lambda \in \Lambda}$ is continuous and continuously invertible on its range exactly if the sampling operator $S_\Lambda \colon \BP_{rR}(\R \times S^1) \to l^2(\Lambda), g\mapsto (g(\lambda))_{\lambda \in \Lambda}$ is continuous and continuously invertible on its range; hence, we can concentrate our analysis on the latter.

One possible way of getting rid of the symmetry requirement is to consider sampling sets of the form $\Lambda \subset \R \times [0,\pi]$ and interpret the Radon transform as a map from $\BP_{rR}(\R^2)$ to $\BP_{rR}(\R) \otimes L^2([0,\pi])$. The inverse of the sampled Radon transform is then given as $\Radon^{-1} \mathcal{I} S_\Lambda^{-1}$, where $\mathcal{I} : \BP_{rR}(\R) \otimes L^2([0,\pi]) \to \BP_{rR}(\R \times S^1)$ is the isomorphism defined by
\begin{equation*}
(\mathcal{I}f)(s,\omega) = \begin{cases}
f(s,\arg(\omega)) & \text{if}\;0\leq \arg(\omega) < \pi,\\
f(-s,\arg(\omega)-\pi) &\text{otherwise}.
\end{cases}
\end{equation*}
It is equally possible, but slightly more technical, to allow for sampling grids $\Lambda \subset \R^+ \times [0,2\pi]$. However, due to space limitations, we will postpone comments on that case to an upcoming journal publication.

Lastly, we need to restrict the angular behavior of admitted functions to some finite dimensional and, thus, automatically RKHS subspace $\G \subset L^2([0,\pi])$. The finiteness condition is a consequence of $[0,\pi]$ being bounded. One can then always find an angular sampling grid $\Lambda_\omega \subset [0,\pi)$ with $|\Lambda_\omega| = \dim(\G)$ which is a set of uniqueness and
a set of sampling for $\G$.

\begin{figure}
\centering
\begin{tikzpicture}[scale=1.045]
\foreach \x in {6pt, 18pt, 24pt, 30pt, 33pt, 42pt, 58pt, 63pt}
{
  \draw[very thin,black!50] (0,0) circle (\x);
  \foreach \w in {3, 20, 26, 33, 47, 50, 58, 64, 80, 82, 85, 87, 93, 99, 103, 111, 121, 129, 134, 149, 160, 164, 169, 175, 178}
  {
    \fill (\w:\x) circle (1.5pt);
    \fill (\w+180:\x) circle (1.5pt);
  };
};
\end{tikzpicture}
\caption{Example of an irregular polar sampling grid in parallel geometry that is the Cartesian product of a radial and an angular irregular sampling grid. All circles may be unequally spaced, but the angular pattern must be the same on each circle. Only the first few circles are shown.}\label{fig:example}
\end{figure}
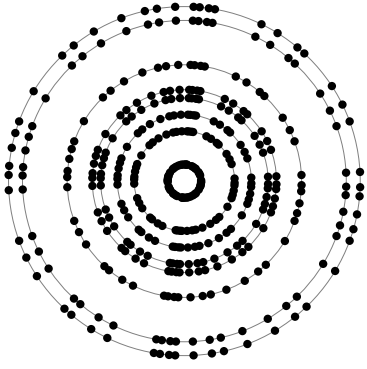
\begin{thm}[Sampling theorem for the Radon transform]\label{thm:sampledradon}
Let $\Lambda_\omega$ be a set of uniqueness and, thus, set of sampling for some finite dimensional subspace $\G \subset L^2([0,\pi])$, $\Lambda_s \subset \R$ a uniformly discrete set and let $\Lambda = \Lambda_s \times \Lambda_\omega$. Let $\H \subset \BP_{rR}(\R^2)$ be defined as $\H = (\Radon^{-1}\circ\mathcal{I})(\BP_{rR}(\R)\otimes \G)$ and let $\Radon_\Lambda : \H \to l^2(\Lambda)$ denote the sampled Radon transform.
\begin{enumerate}[(i)]
\item For $\Radon_\Lambda$ to be injective
it is sufficient that $D_{bm}(\Lambda_s) > R/\pi$.
\item For $\Radon_\Lambda$ to be continuous and continuously invertible
it is sufficient that $D_f(\Lambda_s) > R/\pi$.% (Jaffard's sufficient condition).
\end{enumerate}
\end{thm}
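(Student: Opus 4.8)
The plan is to reduce the statement to Theorems~\ref{thm:samplingtheorem} and~\ref{thm:tensorproduct} by unwinding the chain of isomorphisms introduced in Sections~\ref{sec:radonpw} and~\ref{sec:radonsampling}. First I would observe that, by Theorems~\ref{thm:radonisomorphism} and~\ref{thm:range} together with the isomorphism $\mathcal{I}$, the operator $\Radon_\Lambda \colon \H \to l^2(\Lambda)$ factors as $\Radon_\Lambda = S_\Lambda \circ \mathcal{I}^{-1} \circ \Radon$, where $S_\Lambda \colon \BP_{rR}(\R)\otimes \G \to l^2(\Lambda)$ is the plain sampling operator $g \mapsto (g(\lambda))_{\lambda \in \Lambda}$. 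Since $\mathcal{I}^{-1}\circ \Radon$ restricts to a Hilbert space isomorphism from $\H$ onto $\BP_{rR}(\R)\otimes \G$, injectivity (resp.\ continuity with continuous inverse on the range) of $\Radon_\Lambda$ is equivalent to the corresponding property of $S_\Lambda$ on $\BP_{rR}(\R)\otimes\G$. So it suffices to prove the two density statements for $S_\Lambda$ acting on the tensor product space.

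Next I would identify the two tensor factors as RKHS. The angular factor $\G$ is finite-dimensional, hence automatically a RKHS on $[0,\pi]$, and by hypothesis $\Lambda_\omega$ is a set of uniqueness and a set of sampling for $\G$. For the radial factor, $\BP_{rR}(\R) \subset \PW_R(\R)$ is a closed subspace of a Paley-Wiener space, hence a RKHS on $\R$; since $\Lambda_s$ is uniformly discrete, the sampling operator on $\PW_R(\R)$ maps into $l^2(\Lambda_s)$, and the same holds on the subspace. I would then invoke Theorem~\ref{thm:samplingtheorem} in its general form (with $\pi$ replaced by $R$ and $1$ by $R/\pi$): the hypothesis $D_{bm}(\Lambda_s) > R/\pi$ makes $\Lambda_s$ a set of uniqueness for $\PW_R(\R)$, and $D_f(\Lambda_s) > R/\pi$ makes it a set of sampling for $\PW_R(\R)$. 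Here a small point needs care: these properties are stated for $\PW_R(\R)$, and I must transfer them to the subspace $\BP_{rR}(\R)$ — uniqueness is inherited trivially by restriction, and the sampling (frame) inequality is likewise inherited by restricting the frame inequality to vectors in the subspace, so the upper and lower frame bounds survive.

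Finally I would apply Theorem~\ref{thm:tensorproduct} with $\H_1 = \BP_{rR}(\R)$, $\H_2 = \G$, $\Lambda_1 = \Lambda_s$, $\Lambda_2 = \Lambda_\omega$: in case (i), $\Lambda_s$ is a set of uniqueness for $\H_1$ and $\Lambda_\omega$ is a set of uniqueness for $\H_2$, so $\Lambda_s \times \Lambda_\omega$ is a set of uniqueness for $\H_1 \otimes \H_2 = \BP_{rR}(\R)\otimes\G$, i.e.\ $S_\Lambda$ is injective; in case (ii), both are sets of sampling, so $\Lambda$ is a set of sampling for the tensor product, i.e.\ $S_\Lambda$ is bounded and boundedly invertible on its range. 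Composing with the isomorphism $\mathcal{I}^{-1}\circ\Radon$ then yields the claimed properties of $\Radon_\Lambda$ on $\H$. The only genuine subtlety I anticipate is the passage from $\PW_R(\R)$ to the closed subspace $\BP_{rR}(\R)$ in the application of Theorem~\ref{thm:samplingtheorem}, and, relatedly, confirming that $\BP_{rR}(\R)$ is genuinely a RKHS so that Theorem~\ref{thm:tensorproduct} applies; both are routine but must be stated, since Theorem~\ref{thm:samplingtheorem} is phrased for the full Paley-Wiener space rather than for bandpass subspaces.
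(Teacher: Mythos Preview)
Your proposal is correct and follows essentially the same route as the paper's proof: apply Theorem~\ref{thm:samplingtheorem} to obtain the uniqueness resp.\ sampling property of $\Lambda_s$ for $\PW_R(\R)$, pass to the closed subspace $\BP_{rR}(\R)$, invoke Theorem~\ref{thm:tensorproduct} with the RKHS factors $\BP_{rR}(\R)$ and $\G$, and then transport the conclusion through the isomorphisms $\Radon$ and $\mathcal{I}$. The only difference is that you spell out the factorisation $\Radon_\Lambda = S_\Lambda \circ \mathcal{I}^{-1}\circ \Radon$ and the RKHS and subspace-inheritance points explicitly, whereas the paper dispatches these in a single sentence.
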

\begin{proof}
Due to Theorem~\ref{thm:samplingtheorem}, the conditions are sufficient for $\Lambda_s$ being a set of uniqueness, resp.\ set of sampling, for $\PW_R(\R)$ and thus also for the subspace $\BP_{rR}(\R)$. With the assumptions on $\Lambda_\omega$, we use Theorem~\ref{thm:tensorproduct} to see that $\Lambda$ is a set of uniqueness, resp.\ set of sampling, for $\BP_{rR}(\R)\otimes \G$. Hence, the sampling operator $S_\Lambda \colon \BP_{rR}(\R) \otimes \G \to l^2(\Lambda)$ is injective, resp.\ continuous and continuously invertible on its range. These properties pass to the sampled Radon transform as all remaining operators are isomorphisms.
\end{proof}
Figure~\ref{fig:example} shows an example of an irregular sampling grid in parallel geometry that is symmetric about the origin.

\section{Conclusion}
As a consequence of the continuity of the inverse Radon transform of bandpass functions shown in Theorem~\ref{thm:radonisomorphism}, the reconstruction problem is not, strictly speaking, ill-posed, i.e., choosing $\BP_{rR}(\R^2)$ for reconstruction is stabilizing.

We will provide a reconstruction formula in an upcoming journal publication. For functions with finite angular Fourier series, the $\sinc$ function expansion is particularly suited for computation of the inverse Radon transform as all but a single one-dimensional integration can be carried out analytically using the theory of Bessel functions. To illustrate the practicality of our analytical reconstruction formula, we applied our method to the reconstruction of a non radially bandlimited image from its irregularly sampled Radon transform (Fig.~\ref{fig:recon}).
\begin{figure}
\includegraphics[scale=0.735]{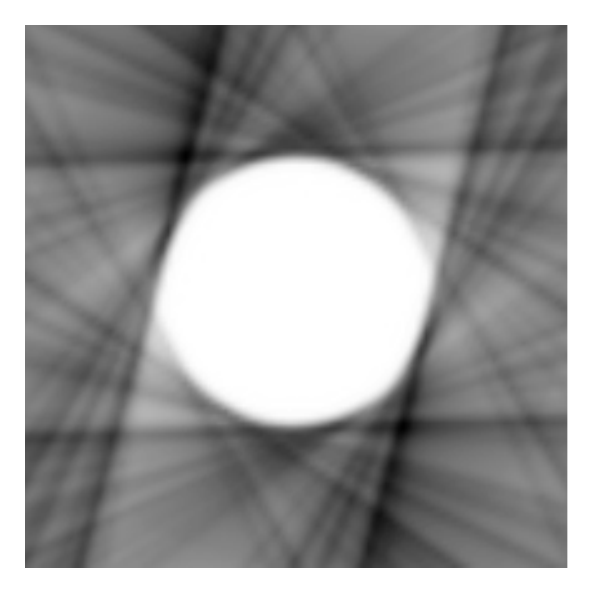}\hfill\includegraphics[scale=0.735]{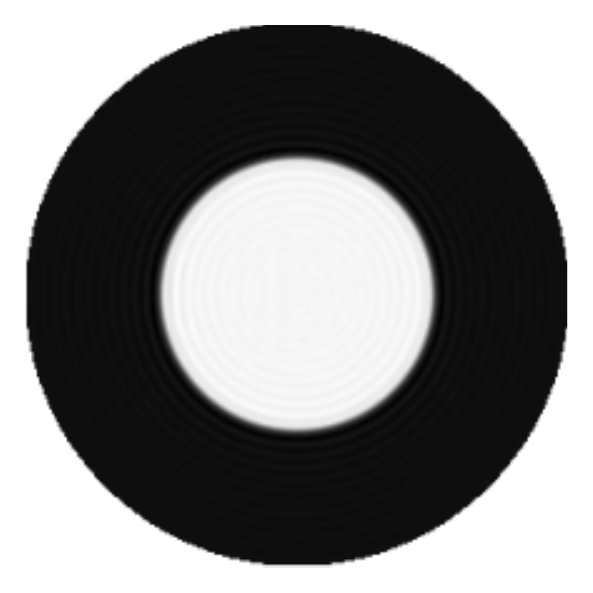}
\caption{Comparison of the reconstructions of a circle as obtained by Matlab's \texttt{iradon} function (Image Processing Toolbox), which uses filtered backprojection and a Ram-Lak filter, and a procedure that is based on sampling theory, where $\sinc$-functions for radial components and complex exponentials for angular components were mapped through the inverse Radon transform.}\label{fig:recon}
\end{figure}
\bibliographystyle{IEEEtran}
\bibliography{IEEEabrv,bibtex}

\end{document}